\documentclass[11pt]{amsart}
\usepackage{geometry} 
\geometry{a4paper} 

\def\R{\mathbb{R}}

\def\id{\mathbf{1}}

\def\bA{\mathcal{A}}

\def\dv{{\hbox{div}\,}}
\def\curl{{\hbox{curl}\,}}
\def\supp{{\hbox{supp}\,}}

\def\bx{\bold x}

\def\bu{\bold u}
\def\bv{\bold v}
\def\bU{\bold U}
\def\bbU{\mathbb{U}}
\def\bV{\bold V}

\def\ba{\bold a}
\def\bbe{\bold b}

\def\bA{\bold A}
\def\bC{\bold C}

\def\bQ{\bold Q}
\def\bF{\bold F}

\def\bcero{\bold 0}

\numberwithin{equation}{section}

\newtheorem{theorem}{Theorem}

\newtheorem{lemma}{Lemma}
\newtheorem{proposition}{Proposition}
\newtheorem{problem}{Problem}

\title[Non-linear conductivity]{Weak limits in non-linear conductivity}
\author{Pablo Pedregal}
\address{ETSI Industriales, Universidad de Castilla-La Mancha, 13071 Ciudad Real, Spain}
\email{pablo.pedregal@uclm.es}
\subjclass[2000]{}
\thanks{
E.T.S. Ingenieros Industriales. Universidad de Castilla La Mancha.
Campus de Ciudad Real (Spain). Research supported by
MTM2010-19739 of the MCyT (Spain).
 e-mail:{\tt pablo.pedregal@uclm.es
}}

\begin{document}
\maketitle
\begin{abstract}
The problem of characterizing weak limits of sequences of solutions for a non-linear diffusion equation of $p$-laplacian type is addressed. It is formulated in terms of certain moments of underlying Young measures associated with main fields corresponding to the non-linear equation. We show two main results. One is a necessary condition; the other one a sufficient counterpart. Unlike the linear case, those two conditions do not match. 
\end{abstract}
\section{Introduction}
We are concerned here with a typical optimal design problem in conductivity. Such problems are by now well understood both from an analytical perspective as well as a practical and computational viewpoints. There is a great amount of literature on this topic from all important and relevant points of view. Many deep results have been established in the context of homogenization over the years, particularly in \cite{AllaireA}, \cite{Cherkaev}, \cite{MuratTartarC}, \cite{MuratTartar}, \cite{MuratTartarA}, \cite{TartarK}, \cite{TartarM}. From a more applied-oriented viewpoint see for instance \cite{BendsoeA}. 
See also \cite{miltonZ} for a comprehensive reference to many of the mechanical implications of homogenization theory.  \cite{LiptonD} treats the fundamental and difficult gradient constraints.

The situation that we would like to address corresponds to a non-linear conductivity equation.  Although more general situations can be treated under suitable structural assumptions, to make computations more explicit we will stick to a $p(=4)$-laplacian type equation 
\begin{equation}\label{nolin}
\dv[(\chi(\bx)\alpha_1+(1-\chi(\bx))\alpha_0)|\nabla u(\bx)|^2\nabla u(\bx)]=0\hbox{ in }\Omega,\quad u=u_0\hbox{ on }\partial\Omega.
\end{equation}
The design domain $\Omega\subset\R^N$ is supposed to be a bounded, Lipschitz domain; $u_0$ furnishes the boundary datum; $\alpha_i$ are two non-negative constants $\alpha_1>\alpha_0$ associated with each of the non-linear materials at our disposal. The design variable $\chi(\bx)$ is a characteristic (binary) variable indicating where in $\Omega$ to place each of the two materials in fixed global proportions
$$
\int_\Omega\chi(\bx)\,d\bx=r|\Omega|,\quad r\in(0, 1).
$$
The coefficient $\chi(\bx)\alpha_1+(1-\chi(\bx))\alpha_0$ yields the non-linear conductivity coefficient of the mixture. Note that (\ref{nolin}) admits a unique solution $u\in W^{1, 4}(\Omega)$ if $u_0$ is the trace on $\partial\Omega$ of a given $u_0\in W^{1, 4}(\Omega)$. 

The choice of $\chi$ is made according to a certain functional cost. Since we would like to focus on the effect of the underlying non-linear state equation (\ref{nolin}), at this initial contribution we will assume the  simplest functional cost so that it does not interfere with the non-linear nature of the state equation in the relaxation process, namely,
$$
I(\chi)=\int_\Omega\bF(\bx)\cdot\nabla u(\bx)\,d\bx,
$$
where $u\in W^{1, 4}(\Omega)$ is the solution of (\ref{nolin}). The field $\bF$ is also given, and belongs to $L^{4/3}(\Omega; \R^N)$. If $u_j$ is the corresponding solution of (\ref{nolin}) for $\chi_j$, and $u_j\rightharpoonup u$ in $W^{1, 4}(\Omega)$, then it is clear that 
$$
I(\chi_j)\to\int_\Omega\bF(\bx)\cdot\nabla u(\bx)\,d\bx.
$$
In this way, relaxation amounts to characterizing possible weak limits of solutions of (\ref{nolin}). Understanding these weak limits under the non-linear conductivity law is the main purpose of this contribution. 

More specifically, we would like to address the issue of characterizing weak limits of sequences of pairs $(\bU_j, \bV_j)$ under the constraints
\begin{enumerate}
\item $\curl\bU_j=0$, $\dv\bV_j=0$ in $\Omega$ for all $j$;
\item $\bV_j=(\chi\alpha_1+(1-\chi)\alpha_0)|\bU_j|^2\bU_j$;
\item $|\{\chi=1\}|=t|\Omega|$.
\end{enumerate}
In addition, one can keep in mind to demand $u_j=u_0$ on $\partial\Omega$ if $\bU_j=\nabla u_j$. 

Our two main results provide a necessary, and a sufficient condition that, unfortunately, do not match. 
\begin{theorem}
Suppose $(\bU, \bV)$ is a pair of (constant) fields which is a weak limit (in $L^2(\Omega; \R^2\times\R^2)$) of a sequence as just indicated, corresponding to relative volume fraction $t$. Then
$$
\gamma|\bU|^4\le\bV\cdot\bU,
$$
for
$$
\gamma=\frac{\alpha_1\alpha_0}{((1-t)\alpha_1^{1/3}+t\alpha_0^{1/3})^3}.
$$
\end{theorem}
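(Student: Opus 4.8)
The plan is to derive the bound from the compensated--compactness (div--curl) structure together with a single, well-chosen Hölder inequality whose exponents are dictated by the quartic growth of the state law. Throughout, write $a_j=\chi_j\alpha_1+(1-\chi_j)\alpha_0$, so that $\bV_j=a_j|\bU_j|^2\bU_j$ and the scalar product linearizes pointwise as $\bV_j\cdot\bU_j=a_j|\bU_j|^4$.

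First I would record the commutation identity. Since $\curl\bU_j=0$ and $\dv\bV_j=0$, and since (by the a priori $W^{1,4}$ bound) $\bU_j$ is bounded in $L^4(\Omega;\R^2)$ while $\bV_j$ is bounded in the dual space $L^{4/3}(\Omega;\R^2)$, the div--curl lemma applies in this $L^4$--$L^{4/3}$ pairing and gives $\bV_j\cdot\bU_j\rightharpoonup\bV\cdot\bU$ in the sense of distributions. Testing against interior cutoffs exhausting $\Omega$, and using that the limiting pair is constant, yields
$$\frac{1}{|\Omega|}\int_\Omega a_j|\bU_j|^4\,d\bx=\frac{1}{|\Omega|}\int_\Omega\bV_j\cdot\bU_j\,d\bx\longrightarrow\bV\cdot\bU.$$
In the language of the paper, this says that the relevant second moment of the limiting Young measure generated by $(\chi_j,\bU_j)$ reproduces the product of the weak limits.

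Second comes the algebraic heart. Weak lower semicontinuity of the norm (equivalently Jensen's inequality), obtained by testing $\bU_j\rightharpoonup\bU$ against the constant direction $\bU/|\bU|$, gives $|\bU|\le\liminf_j\frac{1}{|\Omega|}\int_\Omega|\bU_j|\,d\bx$. I would then split the integrand as $|\bU_j|=a_j^{-1/4}\,(a_j^{1/4}|\bU_j|)$ and apply Hölder with the conjugate exponents $4/3$ and $4$, which are precisely the exponents compatible with the $4$-growth:
$$\frac{1}{|\Omega|}\int_\Omega|\bU_j|\,d\bx\le\left(\frac{1}{|\Omega|}\int_\Omega a_j^{-1/3}\,d\bx\right)^{3/4}\left(\frac{1}{|\Omega|}\int_\Omega a_j|\bU_j|^4\,d\bx\right)^{1/4}.$$
The first factor is constant along the sequence, since $a_j^{-1/3}$ takes only the two values $\alpha_i^{-1/3}$ on sets of fixed measure, and so equals $\bigl(t\alpha_1^{-1/3}+(1-t)\alpha_0^{-1/3}\bigr)^{3/4}$; the second factor converges to $(\bV\cdot\bU)^{1/4}$ by the first step. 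Passing to the limit, raising to the fourth power, and using the arithmetic identity
$$\bigl(t\alpha_1^{-1/3}+(1-t)\alpha_0^{-1/3}\bigr)^{3}=\frac{\bigl((1-t)\alpha_1^{1/3}+t\alpha_0^{1/3}\bigr)^{3}}{\alpha_1\alpha_0}=\frac{1}{\gamma}$$
turns $|\bU|^4\le\gamma^{-1}\,\bV\cdot\bU$ into the claimed estimate (the case $\bU=\bcero$ being trivial, as $\bV\cdot\bU\ge0$).

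The genuinely delicate point is the first step: the validity of the div--curl lemma in the non-Hilbertian $L^4$--$L^{4/3}$ pairing, and the attendant need to control $(\bU_j,\bV_j)$ uniformly in those spaces rather than merely in $L^2$ as the statement advertises. The remaining ingredients --- weak lower semicontinuity, the Hölder split, and the arithmetic identifying the harmonic-type mean with $\gamma$ --- are elementary. As a consistency check, the entire chain is saturated by a rank-one laminate with both fields normal to the layers, for which $\bV_j\cdot\bU_j=\gamma|\bU|^4$ exactly; this is what makes the exponent $1/3$ and the particular weights entering $\gamma$ inevitable.
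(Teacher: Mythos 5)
Your route is genuinely different from the paper's, and its algebraic core is correct. The paper never works with integrals over $\Omega$ directly: it passes to the Young measure $\nu=t\nu_1+(1-t)\nu_0$ of the pair sequence, transfers the div--curl information to moments ($\bU\cdot\bV=t\alpha_1q_1+(1-t)\alpha_0q_0$, with $q_i$ the fourth-order moments of the projections $\mu_i$), relaxes to a Hankel-matrix moment problem (its Problem 3), and produces $\gamma$ by minimizing $t\alpha_1|\bU-(1-t)\ba|^4+(1-t)\alpha_0|\bU+t\ba|^4$ over the free vector $\ba$. Your H\"older split with exponents $(4/3,4)$ is exactly the dual form of that minimization: equality in your H\"older step forces $|\bU_j|\propto a_j^{-1/3}$, which is the same proportion $|\bU_1|/|\bU_0|=\alpha_0^{1/3}/\alpha_1^{1/3}$ that the paper's optimal $\ba$ produces, so it is no accident that the same constant appears. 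What your argument buys is the elimination of Young measures and of the moment problem altogether, and it avoids having to represent limits of quartic integrands through $\nu$ (an equi-integrability point the paper hides behind ``it is well-known''). Your closing consistency check is also right: at the optimal configuration one has $\alpha_1|\bU_1|^2\bU_1=\alpha_0|\bU_0|^2\bU_0$, i.e.\ $\bV_1=\bV_0$, so the pair of phases is rank-one compatible and the bound is attained by a simple laminate in series.

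The gap is your first step, and it is a genuine one. The div--curl lemma in the $L^4$--$L^{4/3}$ pairing gives $\bV_j\cdot\bU_j\to\bV\cdot\bU$ only in $\mathcal{D}'(\Omega)$, i.e.\ against compactly supported test functions. Your chain needs
$$
\limsup_j\frac1{|\Omega|}\int_\Omega a_j|\bU_j|^4\,d\bx\le\bV\cdot\bU,
$$
and ``interior cutoffs exhausting $\Omega$'' deliver precisely the opposite inequality: since $a_j|\bU_j|^4\ge0$, cutoffs only give $\liminf_j\frac1{|\Omega|}\int_\Omega a_j|\bU_j|^4\,d\bx\ge\bV\cdot\bU$, while concentration of $|\bU_j|^4$ at $\partial\Omega$ --- which the hypotheses (1)--(3) do not exclude, as no boundary condition is imposed --- can make the $\limsup$ strictly larger, degrading your final estimate in the useless direction. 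Two repairs are available. (i) Add the affine Dirichlet condition $u_j=\bU\cdot\bx$ on $\partial\Omega$ (the paper itself suggests keeping such a condition in mind): then, since $\dv\bV_j=0$ and $u_j-\bU\cdot\bx\in W^{1,4}_0(\Omega)$, integration by parts gives the exact identity $\int_\Omega\bV_j\cdot\bU_j\,d\bx=\int_\Omega\bV_j\cdot\bU\,d\bx$, whose right-hand side converges to $|\Omega|\,\bV\cdot\bU$ by weak convergence of $\bV_j$ alone --- no div--curl lemma is needed at that point. (ii) Keep the hypotheses as stated and localize: run the H\"older argument on balls $B\subset\subset\Omega$ with cutoffs, obtaining $\gamma(t_B)|\bU|^4\le\bV\cdot\bU$ with the local volume fraction $t_B$, and then recover the global $t$ from the facts that $\gamma$ is increasing in $t$ and that the local fractions, averaged over an interior exhaustion, are at least $t-o(1)$. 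This second repair is essentially what the paper's pointwise Young-measure formulation does tacitly, and it carries the same local-versus-global bookkeeping that your global formulation was designed to avoid.
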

For the sufficient condition, consider the function
$$
\Psi_{(t, \bU)}(\bx)=(\alpha_0|t\bx+\bU|^2(t\bx+\bU)+\alpha_1|(1-t)\bx-\bU|^2((1-t)\bx-\bU))\cdot\bx,\quad \bx\in\R^N,
$$
and the related map
$$
\Phi_{(t, \bU)}:\R^N\mapsto\R^N,\quad 
\Phi_{(t, \bU)}(\bx)=t\alpha_1|\bU-(1-t)\bx|^2(\bU-(1-t)\bx)+(1-t)\alpha_0|\bU+t\bx|^2(\bU+t\bx).
$$

\begin{theorem}
The triplet $(t, \bU, \bV)$ is reachable by lamination if and only if
$$
\bV\in\Phi_{(t, \bU)}(\bC(t, \bU)),
$$
where
$$
\bC(t, \bU)=\{\bbU\in\R^N: \Psi_{(t, \bU)}(\bbU)\le0\}.
$$
\end{theorem}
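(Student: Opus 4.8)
The plan is to read the parameter $\bx$ as the jump of the curl-free field between the two phases. Setting $\bU_1=\bU-(1-t)\bx$ and $\bU_0=\bU+t\bx$ for the fields carried by phase $1$ and phase $0$, one has $t\bU_1+(1-t)\bU_0=\bU$, $\bU_0-\bU_1=\bx$, and the associated currents $\bV_i=\alpha_i|\bU_i|^2\bU_i$ average to $t\bV_1+(1-t)\bV_0=\Phi_{(t,\bU)}(\bx)$. The whole argument rests on two identities, which I would record first. The compatibility identity
$$(\bU_1-\bU_0)\cdot(\bV_1-\bV_0)=\Psi_{(t,\bU)}(\bx)$$
shows that a simple (rank-one) laminate with interface normal $\mathbf{n}\parallel\bx$ is admissible -- curl-free forcing $\bU_1-\bU_0\parallel\mathbf{n}$ and div-free forcing $(\bV_1-\bV_0)\cdot\mathbf{n}=0$ -- exactly when $\Psi_{(t,\bU)}(\bx)=0$. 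The energy identity
$$\bU\cdot\Phi_{(t,\bU)}(\bx)=t\,\alpha_1|\bU_1|^4+(1-t)\,\alpha_0|\bU_0|^4-t(1-t)\,\Psi_{(t,\bU)}(\bx)$$
then records that, relative to the frozen-field dissipation, the sign of $\Psi$ controls the surplus energy carried by $\Phi_{(t,\bU)}(\bx)$.

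For the \emph{sufficiency} of $\bx\in\bC(t,\bU)$ I would argue constructively. When $\Psi_{(t,\bU)}(\bx)=0$ the simple laminate above has weak limit $(\bU,\Phi_{(t,\bU)}(\bx))$ by construction. The key observation for the interior is that all such simple laminates share the \emph{same} coarse average $\bU$ and fraction $t$; hence any two of them, say with currents $\Phi_{(t,\bU)}(\bx_A)$ and $\Phi_{(t,\bU)}(\bx_B)$, have vanishing coarse $\bU$-jump and are therefore automatically compatible at a second scale: choosing $\mathbf{n}\perp(\bV_A-\bV_B)$ makes a coarse laminate admissible, with effective field $\bU$ and effective current $s\,\Phi_{(t,\bU)}(\bx_A)+(1-s)\Phi_{(t,\bU)}(\bx_B)$. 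Thus the reachable currents contain the full convex hull of $\{\Phi_{(t,\bU)}(\bx):\Psi_{(t,\bU)}(\bx)=0\}$, and sufficiency reduces to the purely geometric claim that $\{\Psi_{(t,\bU)}=0\}$ parametrizes the boundary and $\{\Psi_{(t,\bU)}<0\}$ the interior, so that $\Phi_{(t,\bU)}(\bC(t,\bU))$ coincides with that convex hull.

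For the \emph{necessity} I would run the div-curl lemma against convexity. If $(\bU,\bV)$ is reachable by lamination it is a weak limit of admissible pairs with underlying Young measure $\nu$, and the div-curl lemma gives $\bU\cdot\bV=\langle\nu,\bU\cdot\bV\rangle=\langle\nu,\alpha|\bU|^4\rangle$. Splitting the mass across the two phases and applying Jensen's inequality to the strictly convex integrand $|\cdot|^4$ yields $\bU\cdot\bV\ge t\,\alpha_1|\bU_1|^4+(1-t)\,\alpha_0|\bU_0|^4$, with $\bU_1,\bU_0$ the conditional means of the field. Read through the energy identity, this is precisely the statement that lamination cannot lower the dissipation below the frozen value, i.e. that the set $\{\,\bV\in\Phi_{(t,\bU)}(\bC(t,\bU))\,\}$ is stable under the coarse lamination operation; since it also contains the pure phases, it contains the entire lamination hull, which is the reachable set.

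The main obstacle lives in matching the two directions through the cubic map $\Phi_{(t,\bU)}$. On the sufficiency side one must establish the geometric identity $\Phi_{(t,\bU)}(\bC(t,\bU))=\mathrm{conv}\,\Phi_{(t,\bU)}(\{\Psi_{(t,\bU)}=0\})$, which amounts to showing that the image of the sublevel set $\{\Psi_{(t,\bU)}\le0\}$ is convex with boundary $\Phi_{(t,\bU)}(\{\Psi_{(t,\bU)}=0\})$ -- a non-trivial fact about the orientation and fibers of a degree-three map. On the necessity side one must check that the energy bound, produced for the phase-mean split, actually places $\bV$ in the \emph{image} $\Phi_{(t,\bU)}(\bC(t,\bU))$ rather than merely in an energy half-space; controlling the fibers $\Phi_{(t,\bU)}^{-1}(\bV)\cap\bC(t,\bU)$ is where the non-linearity genuinely bites, and is ultimately the reason this laminate characterization need not saturate the necessary condition of the preceding theorem.
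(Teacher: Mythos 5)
Your preparatory work is correct and matches the paper's construction: the identity $(\bU_1-\bU_0)\cdot(\bV_1-\bV_0)=\Psi_{(t,\bU)}(\bx)$ (with $\bU_1=\bU-(1-t)\bx$, $\bU_0=\bU+t\bx$) is indeed the rank-one compatibility condition, the energy identity $\bU\cdot\Phi_{(t,\bU)}(\bx)=t\alpha_1|\bU_1|^4+(1-t)\alpha_0|\bU_0|^4-t(1-t)\Psi_{(t,\bU)}(\bx)$ checks out, and the second-scale lamination of two simple laminates sharing the same $\bU$ (normal orthogonal to the difference of their currents) is exactly the paper's second-order construction. This shows that every point of $\mathrm{conv}\,\Phi_{(t,\bU)}(\{\Psi_{(t,\bU)}=0\})$ is reachable. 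But your proposal stops there, and the steps you flag as ``obstacles'' are not peripheral: they are the actual content of the paper's proof. For sufficiency one needs (i) that $\Psi_{(t,\bU)}$ is (strictly) convex, so that $\bC(t,\bU)$ is a convex body with boundary $\{\Psi_{(t,\bU)}=0\}$ and interior $\{\Psi_{(t,\bU)}<0\}$ --- the paper proves this in its Lemma by writing $\bx=(t\bx+\bU)+((1-t)\bx-\bU)$ and expanding $\Psi_{(t,\bU)}$ as a sum of three convex terms; and (ii) that $\Phi_{(t,\bU)}(\bC(t,\bU))$ is convex with $\partial\Phi_{(t,\bU)}(\bC(t,\bU))=\Phi_{(t,\bU)}(\partial\bC(t,\bU))$, which the paper argues by rewriting $\Phi_{(t,\bU)}(\bx)=(t\alpha_1|\bU-(1-t)\bx|^2+(1-t)\alpha_0|\bU+t\bx|^2)\bU+t(1-t)(\alpha_0|\bU+t\bx|^2-\alpha_1|\bU-(1-t)\bx|^2)\bx$ and analyzing the images of the spherical level sets of $\alpha_0|\bU+t\bx|^2-\alpha_1|\bU-(1-t)\bx|^2$. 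Without (i) and (ii), the inclusion $\Phi_{(t,\bU)}(\bC(t,\bU))\subset\mathrm{conv}\,\Phi_{(t,\bU)}(\{\Psi_{(t,\bU)}=0\})$, which is what makes an arbitrary $\bV\in\Phi_{(t,\bU)}(\bC(t,\bU))$ reachable, is never established; you leave it as an unproved ``geometric identity.''

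The necessity direction has a genuine logical gap, which you partly acknowledge yourself. The div-curl/Jensen computation yields only the scalar inequality $\bU\cdot\bV\ge t\alpha_1|\bU_1|^4+(1-t)\alpha_0|\bU_0|^4$; this constrains the single number $\bU\cdot\bV$ and so confines $(t,\bU,\bV)$ to the region of the paper's \emph{necessary-condition} theorem, not to the $N$-dimensional image set $\Phi_{(t,\bU)}(\bC(t,\bU))$. Since the whole point of the paper is that these two regions do \emph{not} coincide, an argument using only that inequality cannot prove necessity here. The assertion that $\{(t,\bU,\bV):\bV\in\Phi_{(t,\bU)}(\bC(t,\bU))\}$ is ``stable under the coarse lamination operation'' is precisely what requires proof --- it again rests on the convexity and boundary structure (i)--(ii) of the image set, which is how the paper closes this direction. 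In short: your plan is sound and parallels the paper, but its two load-bearing steps are identified rather than proved, so the proposal does not constitute a proof.
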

For the proof of these two results, we briefly review the linear situation in a way that can be extended for the non-linear case, at least to derive necessary and sufficient conditions. 
This goes directly into formulating the problem in terms of moments of underlying Young measures (\cite{BalderD}, \cite{MullerE}, \cite{PedregalI}, \cite{PedregalAB}, \cite{ValadierA}, \cite{YoungB}). Our point of view is similar to the one described in \cite{boussaidpedregal}, \cite{PedregalZ}.
A fundamental role is played in this regard by the classical div-curl lemma (\cite{MuratC}, \cite{TartarA}, \cite{TartarM}). 

\section{The linear, isotropic, homogenous case revisited}
As a preliminary step to introduce the ideas for the non-linear situation, let us consider the standard problem of the description of weak limits for the linear case, namely
$$
\dv[(\alpha_1\chi+\alpha_0(1-\chi)) \nabla u]=0\hbox{ in }\Omega, u=u_0\hbox{ on }\partial\Omega,
$$
where $\chi\equiv\chi(\bx)$ is a certain characteristic function in $\Omega$ (which is supposed to be a regular, bounded domain in $\R^N$), further restricted by asking
$$
\int_\Omega\chi(\bx)\,d\bx=r|\Omega|,\quad r\in(0, 1).
$$
If we focus on the field
$$
\bV(\bx)=(\alpha_1\chi(\bx)+\alpha_0(1-\chi(\bx))) \nabla u(\bx)
$$
and look at the pair $(\bU, \bV)$, where $\bU=\nabla u$, then we know, among other things, that
\begin{enumerate}
\item $\dv\bV=0$, $\curl\bU=0$ in $\Omega$;
\item $(\bU(\bx), \bV(\bx))\in\Lambda_1\cup\Lambda_0$ where
$$
\Lambda_i=\{(\bu, \bv)\in\R^N\times\R^N: \bv=\alpha_i\bu\},
$$
a linear manifold;
\item  the measure of the set
$$
\{\bx\in\Omega: (\bU(\bx), \bV(\bx))\in\Lambda_1\}
$$
is $r|\Omega|$. 
\end{enumerate}
The first two constraints are local in $\Omega$, while the third one, and the Dirichlet boundary condition for $u$, are global. 

Assume we now have a sequence $\{\chi_j\}$ of characteristic functions as above. We would therefore have a sequence of pairs $\{(\bU_j, \bV_j)\}$ just as we have discussed. We can consider the corresponding Young measure
$\{\nu_\bx\}_{\bx\in\Omega}$. The condition on the manifolds $\Lambda_i$ becomes a condition on the support of each $\nu_\bx$, namely
$$
\supp(\nu_\bx)\subset\Lambda_1\cup\Lambda_0
$$
for a.e. $\bx\in\Omega$, and so we can write
$$
\nu_\bx=t(\bx)\nu_{1, \bx}+(1-t(\bx))\nu_{0, \bx},\quad t(\bx)\in[0, 1],\supp\nu_{i, \bx}\subset\Lambda_i, i=1, 0.
$$
Since we are going to focus on the local properties, all of our main discussion takes place around a.e. $\bx\in\Omega$, where $\bx$ acts as a parameter. To lighten a bit the notation, we are going to drop the $\bx$-dependence, bearing in mind that all objects occurring throughout our discussion depend on $\bx$. 

Since the weak limit (in $L^2(\Omega; \R^N)$) $(\bU, \bV)$ of $(\bU_j, \bV_j)$ is precisely the first moment of $\nu$, we have
$$
\bU=t\bU_1+(1-t)\bU_0,\quad \bV=t\alpha_1\bU_1+(1-t)\alpha_0\bU_0,
$$
where
$$
\bU_i=\int_{\R^N}\bu\,d\mu_i(\bu),
$$
and $\mu_i$ is just the projection of $\nu_i$ onto the first copy of $\R^N$. The differential information given through the restrictions $\dv\bV_j=0$, $\curl\bU_j=0$ in $\Omega$, can also be transformed into information for the underlying Young measure through the classic div-curl lemma. Indeed, it is well-known that
$$
\bU\cdot\bV=t\alpha_1 q_1+(1-t)\alpha_0 q_0,\quad q_i=\int_{\R^N}|\bu|^2\,d\mu_i(\bu),\quad i=1, 0.
$$
We therefore are led to consider the following first important problem.

\begin{problem}\label{probunoo}
Find a characterization of the set of triplets $(t, \bU, \bV)\in[0, 1]\times\R^N\times\R^N$ so that 
\begin{gather}
\bU=t\bU_1+(1-t)\bU_0,\quad \bV=t\alpha_1\bU_1+(1-t)\alpha_0\bU_0,\nonumber\\
\bU\cdot\bV=t\alpha_1 q_1+(1-t)\alpha_0 q_0,\nonumber
\end{gather}
and there are probability measures $\mu_i$ supported in $\R^N$ in such a way that
\begin{equation}\label{jensen}
\int_{\R^N}(\bu, |\bu|^2)\,d\mu_i(\bu)=(\bU_i, q_i),\quad i=1, 0.
\end{equation}
\end{problem}
In this particular situation we do know the answer to this problem. It was fully examined in \cite{boussaidpedregal}, within this same approach, and it is also a consequence of more general ideas in homogenization (\cite{TartarK}, \cite{TartarM}). We seek to provide a slightly different treatment that might be extended to the non-linear case. The approach in those other references cannot be done so. What is true, at any rate, is that $q_i\ge|\bU_i|^2$ is the only information that can be drawn from (\ref{jensen}). 

One of the most striking differences with respect to a non-linear situation relates to the identity
\begin{equation}\label{uve}
\bV=t\alpha_1\bU_1+(1-t)\alpha_0\bU_0
\end{equation}
in the statement of Problem \ref{probunoo}. This will, by no means, be so in a non-linear scenario. In trying to adapt to the difficulties we may encounter in such a non-linear situation, let us ignore this information, and proceed to use the other pieces of information, namely,
$$
\bU=t\bU_1+(1-t)\bU_0,\quad \bU\cdot\bV=t\alpha_1 q_1+(1-t)\alpha_0 q_0,\quad q_i\ge|\bU_i|^2.
$$
Instead of using these two equality restrictions, let us introduce two new free variables $\bbU\in\R^N$, and $c\in\R$, by putting
$$
\bU_1=\bU-(1-t)\bbU, \bU_0=\bU+t\bbU,\quad q_1=\frac1{\alpha_1}(\bU\cdot\bV-(1-t)c),
q_0=\frac1{\alpha_0}(\bU\cdot\bV+tc),
$$
and then the inequalities become
$$
\frac1{\alpha_1}(\bU\cdot\bV-(1-t)c)\ge|\bU-(1-t)\bbU|^2,\quad 
\frac1{\alpha_0}(\bU\cdot\bV+tc)\ge|\bU+t\bbU|^2.
$$
By eliminating $c$, we finally have
\begin{equation}\label{elipse}
t\alpha_1|\bU-(1-t)\bbU|^2+(1-t)\alpha_0|\bU+t\bbU|^2\le\bU\cdot\bV.
\end{equation}
What are the conditions on the triplet $(t, \bU, \bV)$ so that there is a vector $\bbU$ complying with this inequality? By considering the function
$$
\Phi(\bbU)=t\alpha_1|\bU-(1-t)\bbU|^2+(1-t)\alpha_0|\bU+t\bbU|^2
$$
and computing its absolute minimum at 
$$
\bbU=\frac{\alpha_1-\alpha_0}{(1-t)\alpha_1+t\alpha_0}\bU,
$$
substituting in $\Phi$, we find the condition, after some algebra,
\begin{equation}\label{restriccion}
\frac{\alpha_1\alpha_0}{(1-t)\alpha_1+t\alpha_0}|\bU|^2\le\bU\cdot\bV.
\end{equation}
The feasible vectors $\bbU$ are then those in region (\ref{elipse}) which represents a certain ellipsoid in $\R^N$. The final issue is what among vectors $\bbU$ in this ellipsoid can, in addition, comply with (\ref{uve}) for a given $\bV$ which together with the pair $(t, \bU)$ satisfies (\ref{restriccion}). In the linear situation this is not difficult to check because (\ref{restriccion}) is a linear constraint on $\bbU$
\begin{equation}\label{lineall}
\bV=t\alpha_1(\bU-(1-t)\bbU)+t\alpha_0(\bU+t\bbU).
\end{equation}
The intersecion of the ellipsoid (\ref{elipse}) with this linear constraint must be non-empty. 

This whole process can be made completely explicit, in this linear situation, if we incorporate in our discussion from the very beginning the condition (\ref{lineall}) to eliminate vector $\bV$, and describe everything in terms of $(t, \bU, \bbU)$. Note that  $\bbU=\bU_1-\bU_0$ is related to the direction of lamination, and
\begin{gather}
\bU_1=\bU-(1-t)\bbU,\quad \bU_0=\bU+t\bbU,\nonumber\\ 
\bV=t\alpha_1(\bU-(1-t)\bbU)+(1-t)\alpha_0(\bU+t\bbU).\nonumber
\end{gather}
For arbitrary $(t, \bU, \bbU)$, these formulae provide vectors complying with the two first equality constraints in Problem (\ref{probunoo}). Let us focus on the other two conditions. It is clear that the one involving $\mu_i$ amounts to having $q_i\ge|\bU_i|^2$, and the one related to the inner product can then be expressed as the inequality
$$
\bU\cdot(t\alpha_1(\bU-(1-t)\bbU)+(1-t)\alpha_0(\bU+t\bbU)\ge t\alpha_1|\bU-(1-t)\bbU|^2+(1-t)\alpha_0|\bU+t\bbU|^2.
$$
This inequality only involves the three set of variables $(t, \bU, \bbU)$ knowing that $\bV$ is given by the expression above in terms of these three. 
Going through the algebra in this inequality with a bit of care, we obtain
\begin{equation}\label{condicion}
\frac{(1-t)\alpha_1+t\alpha_0}{\alpha_1-\alpha_0}|\bbU|^2\le \bU\cdot\bbU.
\end{equation}
\begin{theorem}\label{linear}
A triplet $(t, \bU, \bV)\in[0, 1]\times\R^N\times\R^N$ is a weak limit of a sequence $(\chi_j, \nabla u_j, \bV_j)$ complying with
\begin{gather}
u_j(\bx)=\bU\cdot\bx\hbox{ on }\partial\Omega,\quad \dv\bV_j=0\hbox{ in }\Omega,\nonumber\\
\bV_j(\bx)=\alpha_1\chi_j(\bx)+\alpha_0(1-\chi_j(\bx))\nabla u_j\hbox{ in }\Omega,\nonumber\\
\chi_j(\bx)\in\{1, 0\},\quad \int_\Omega\chi_j(\bx)\,d\bx\to|\Omega|t.\nonumber
\end{gather}
if and only if 
$$
\bV=t\alpha_1(\bU-(1-t)\bbU)+(1-t)\alpha_0(\bU+t\bbU)
$$
and $(t, \bU, \bbU)$ satisfies (\ref{condicion}).
\end{theorem}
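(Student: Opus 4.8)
The plan is to prove the two implications separately, and to note at the outset that the necessity half is almost entirely contained in the computation preceding the statement, so that the genuine work lies in sufficiency. For necessity I would start from a sequence $(\chi_j,\nabla u_j,\bV_j)$ as in the hypotheses and pass (along a subsequence) to the Young measure $\nu$ generated by the pairs $(\bU_j,\bV_j)=(\nabla u_j,\bV_j)$. Since each pair lies in $\Lambda_1\cup\Lambda_0$, one gets $\supp\nu_\bx\subset\Lambda_1\cup\Lambda_0$ together with the splitting $\nu_\bx=t\nu_{1,\bx}+(1-t)\nu_{0,\bx}$; the affine boundary datum $u_j=\bU\cdot\bx$ forces the first moment of the gradient to equal $\bU$, and the flux relation on each phase forces $\bV=t\alpha_1\bU_1+(1-t)\alpha_0\bU_0$, which is the stated formula once we set $\bU_1=\bU-(1-t)\bbU$ and $\bU_0=\bU+t\bbU$. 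The classical div-curl lemma upgrades $\curl\bU_j=0$, $\dv\bV_j=0$ into the product identity $\bU\cdot\bV=t\alpha_1 q_1+(1-t)\alpha_0 q_0$, while Jensen's inequality applied to (\ref{jensen}) yields $q_i\ge|\bU_i|^2$. Substituting these into the product identity gives inequality (\ref{elipse}), which, after the algebra already indicated in the text, is exactly (\ref{condicion}). So necessity reduces to assembling these four ingredients.

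For sufficiency I would argue by explicit construction, starting with the boundary case, equality in (\ref{condicion}). Here a first-order laminate suffices: layer the two phases orthogonally to $\bbU/|\bbU|$ with widths in proportion $t:(1-t)$ and assign the constant gradients $\bU_1$ and $\bU_0$. Because $\bU_1-\bU_0=-\bbU$ is parallel to the lamination normal, the piecewise-affine potential is continuous, so $\curl\nabla u_j=0$; a direct check shows that equality in (\ref{condicion}) is precisely the statement $(\alpha_1\bU_1-\alpha_0\bU_0)\cdot\bbU=0$, i.e. the normal component of the flux is continuous and $\dv\bV_j=0$; the volume fraction is $t$, and the averages are $\bU$ and $\bV$ by construction. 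Letting the period tend to zero produces the required weakly convergent sequence. As the normal of this laminate varies, the attainable $\bbU$ sweep out exactly the sphere bounding the region (\ref{condicion}).

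To reach an interior $\bbU$, I would exploit that the map $\bbU\mapsto\bV$ is affine: writing an interior point as a convex combination $\bbU=s\bbU_a+(1-s)\bbU_b$ of two boundary vectors produces $\bV=s\bV_a+(1-s)\bV_b$. Realizing $\bbU_a$ and $\bbU_b$ by the boundary laminates above on a much finer scale, and mixing them in volume proportions $s:(1-s)$, keeps the phase fraction at $t$ and the average gradient at $\bU$ while driving the average flux to $\bV$. Since the convex hull of the bounding sphere is the whole ball (\ref{condicion}), every feasible $\bbU$ is obtained in this way.

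The hard part is making this last mixing step compatible with $\dv\bV_j=0$ globally: the two boundary laminates carry different average fluxes $\bV_a\neq\bV_b$, so merely partitioning $\Omega$ into two regions would create a spurious surface divergence along the interface. I would resolve this in the standard homogenization manner, either by a two-scale (rank-two) lamination that never produces a sharp interface, or by inserting a divergence-cleaning corrector supported in a thin neighborhood of the interface whose energy vanishes as the scale refines. This flux-matching is exactly the point at which one leans on the established machinery of \cite{TartarK}, \cite{TartarM}, \cite{boussaidpedregal}; granting it, the constructed sequence satisfies all of the constraints and converges weakly to $(t,\bU,\bV)$, which completes the sufficiency and hence the proof.
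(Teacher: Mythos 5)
Your overall architecture is the same as the paper's: necessity via the Young measure of $(\nabla u_j,\bV_j)$, the div-curl lemma, and Jensen's inequality, exactly as in the discussion that precedes the statement; sufficiency by first realizing the boundary case of (\ref{condicion}) with a first-order laminate (equality in (\ref{condicion}) being a rewriting of the transmission condition $(\bU_1-\bU_0)\cdot(\alpha_1\bU_1-\alpha_0\bU_0)=0$, which is (\ref{laminado}) in the paper), and then treating interior points by decomposing $\bbU$ as a convex combination of boundary points and using that $\bbU\mapsto\bV$ is affine.

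The problem is the step you yourself call ``the hard part'' and then outsource to the homogenization literature: that is precisely the step the paper closes with a short, self-contained observation, and leaving it to cited machinery is a genuine gap in your argument. The resolution is this: the two boundary laminates being mixed have the \emph{same} average gradient $\bU$ and different average fluxes $\bV_a\ne\bV_b$, so the jump between the two homogenized structures across any interface is the pair $(\bcero,\,\bV_a-\bV_b)$. The curl-free transmission condition (jump of the gradient parallel to the interface normal) is then satisfied trivially for \emph{every} normal, and the div-free transmission condition (jump of the flux orthogonal to the normal) is satisfied exactly as soon as the second lamination direction is chosen orthogonal to $\bV_a-\bV_b$. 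With that one choice the rank-two laminate is fully compatible: there is no spurious surface divergence, hence nothing to correct, and no appeal to external results is needed. Note also that your alternative fix, a ``divergence-cleaning corrector'' along an arbitrarily chosen interface, cannot work: an order-one mismatch in normal flux along a surface of positive area cannot be removed at vanishing cost, so the orthogonality choice is not optional but is the actual content of the step. (As the paper remarks after the proof, since (\ref{condicion}) is a ball through the origin, one may even always take the decomposition with $\bbU_1\parallel\bbU$ and $\bbU_0=\bcero$.)
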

\begin{proof}
The necessity part has already been described in the above discussion as a way to find the constraint (\ref{condicion}). Let us look at sufficiency.

We proceed in two steps. Suppose first that (\ref{condicion}) is an exact equality, i.e. 
\begin{equation}\label{igualdad}
\frac{(1-t)\alpha_1+t\alpha_0}{\alpha_1-\alpha_0}|\bbU|^2=\bU\cdot\bbU.
\end{equation}
Set, then, as above
$$
\bU_1=\bU-(1-t)\bbU,\quad \bU_0=\bU+t\bbU. 
$$
It is elementary to check then that the pairs $(\bU_1, \alpha_1\bU_1)$ and $(\bU_0, \alpha_0\bU_0)$ are such that
\begin{equation}\label{laminado}
(\bU_1-\bU_0)\cdot (\alpha_1\bU_1-\alpha_0\bU_0)=0.
\end{equation}
Indeed, this last identity is another way of writing (\ref{igualdad}). But (\ref{laminado}) is exactly the requirement to build a first-order laminate with normal direction precisely given by $\bbU$. 

Consider next the general situation. The constraint (\ref{condicion}) represents a certain ball in $\bbU$ centered at a multiple of $\bU$ and radius also depending on $\bU$. The important feature is that, for given $(t, \bU$), it represents a convex set for $\bbU$ whose boundary corresponds to (\ref{igualdad}), i.e. our step one above. In particular, if $\bbU$ lies in the interior of this convex set, it can be decomposed as a convex combination 
\begin{equation}\label{dec}
\bbU=r\bbU_1+(1-r)\bbU_0
\end{equation}
where each $\bbU_i$ belongs to the boundary of the circle, and $r\in(0, 1)$. By step one, each triplet $(t, \bU, \bbU_i)$ corresponds to a first-order laminate with volume fraction $t$, lamination direction $\bbU_i$, and corresponding $\bV_i$ given by
$$
\bV_i=t\alpha_1(\bU-(1-t)\bbU_i)+(1-t)\alpha_0(\bU+t\bbU_i),\quad i=1, 0.
$$
But because vector $\bU$ is the same for both cases, $i=1, 0$, we can further laminate $(\bU, \bU)$, and $(\bV_1, \bV_0)$ with relative volume fraction $r$, and lamination direction orthogonal to $\bV_1-\bV_0$. This second-order laminate will have a corresponding vector $\bV$ given by the convex combination
$$
r\bV_1+(1-t)\bV_0=t\alpha_1(\bU-(1-t)\bbU)+(1-t)\alpha_0(\bU+t\bbU)=\bV.
$$
\end{proof}
Note that because the constraint (\ref{condicion}) represents a ball through the origen, the decomposition (\ref{dec}) can always be taken as
$$
\bbU=r[(1/r)\bbU]+(1-r)\bcero,\quad \bbU_1\parallel\bbU, \bbU_0=\bcero.
$$

\section{Reformulation and Young measures for the non-linear situation}
Consider the problem
$$
\dv[(\alpha_1\chi|\nabla u|^2+\alpha_0(1-\chi)|\nabla u|^2) \nabla u]=0\hbox{ in }\Omega, u=u_0\hbox{ on }\partial\Omega,
$$
where $\chi\equiv\chi(\bx)$ is a certain characteristic function in $\Omega$, which is supposed to be a regular, bounded domain in $\R^N$, further restricted by asking
$$
\int_\Omega\chi(\bx)\,d\bx=r|\Omega|,\quad r\in(0, 1).
$$
If we focus on the field
$$
\bV(\bx)=(\alpha_1\chi(\bx)|\nabla u(\bx)|^2+\alpha_0(1-\chi(\bx))|\nabla u(\bx)|^2) \nabla u(\bx)
$$
and look at the pair $(\bU, \bV)$, where $\bU=\nabla u$ then we know, among other things, that
\begin{enumerate}
\item $\dv\bV=0$, $\curl\bU=0$ in $\Omega$;
\item $(\bU(\bx), \bV(\bx))\in\Lambda_1\cup\Lambda_0$ where
$$
\Lambda_i=\{(\bu, \bv)\in\R^N\times\R^N: \bv=\alpha_i|\bu|^2\bu\},
$$
a non-linear manifold;
\item  the measure of the set
$$
\{\bx\in\Omega: (\bU(\bx), \bV(\bx))\in\Lambda_1\}
$$
is $r|\Omega|$. 
\end{enumerate}
The first two constraints are local in $\Omega$, while the third one and the Dirichlet boundary condition for $u$, are global just as in the linear counterpart. 

Assume we have a sequence $\{\chi_j\}$ of characteristic functions as above. We would therefore have a sequence of pairs $\{(\bU_j, \bV_j)\}$ just as we have discussed. We can consider the corresponding Young measure
$\{\nu_\bx\}_{\bx\in\Omega}$. The condition on the manifolds $\Lambda_i$ becomes a condition on the support of each $\nu_\bx$, namely
$$
\supp(\nu_\bx)\subset\Lambda_1\cup\Lambda_0
$$
for a.e. $\bx\in\Omega$, and so we can write
$$
\nu_\bx=t(\bx)\nu_{1, \bx}+(1-t(\bx))\nu_{0, \bx},\quad t(\bx)\in[0, 1],\supp\nu_{i, \bx}\subset\Lambda_i, i=1, 0.
$$
As we did earlier, we are going to drop the $\bx$-dependence, bearing in mind that all objects occurring throughout our discussion depend on $\bx$. In addition, because of our definition of the two manifolds, we can also write
$$
\int_{\Lambda_i}\Phi(\bu, \bv)\,d\nu_i(\bu, \bv)=\int_{\R^N}\Phi(\bu, |\bu|^2\bu)\,d\mu_i(\bu)
$$
where $\mu_i$ is just the projection of $\nu_i$ onto the first copy of $\R^N$, and $\Phi$ is an arbitrary continuous function. 

Since the weak limit $(\bU, \bV)$ of $(\bU_j, \bV_j)$ is precisely the first moment of $\nu$, we have
$$
\bU=t\bU_1+(1-t)\bU_0,\quad \bV=t\alpha_1\bV_1+(1-t)\alpha_0\bV_0,
$$
where
$$
\bU_i=\int_{\R^N}\bu\,d\mu_i(\bu),\quad
\bV_i=\int_{\R^N}|\bu|^2\bu\,d\mu_i(\bu),\quad i=1, 0.
$$
The differential information given through the restrictions $\dv\bV_j=0$, $\curl\bU_j=0$ in $\Omega$, can also be transformed into information for the underlying Young measure through the classic div-curl lemma. Indeed, it is well-known that
$$
\bU\cdot\bV=t\alpha_1 q_1+(1-t)\alpha_0 q_0,\quad q_i=\int_{\R^N}|\bu|^4\,d\mu_i(\bu),\quad i=1, 0.
$$
We therefore are led to consider the following important problem.

\begin{problem}\label{probuno}
Find a characterization of the set of triplets $(t, \bU, \bV)\in[0, 1]\times\R^N\times\R^N$ so that 
\begin{gather}
\bU=t\bU_1+(1-t)\bU_0,\quad \bV=t\alpha_1\bV_1+(1-t)\alpha_0\bV_0,\nonumber\\
\bU\cdot\bV=t\alpha_1 q_1+(1-t)\alpha_0 q_0,\nonumber
\end{gather}
and there are probability measures $\mu_i$ supported in $\R^N$ in such a way that
$$
\int_{\R^N}(\bu, |\bu|^2\bu, |\bu|^4)\,d\mu_i(\bu)=(\bU_i, \bV_i, q_i),\quad i=1, 0.
$$
\end{problem}
To treat this problem, we are naturally led to call to mind the classical algebraic moment problem.

\section{Moment problems}
A vector $\Phi$ of moments is a finite collections of continuous functions $\Phi=(\phi_i)_i$, with $\phi_1\equiv1$, defined in a euclidean space $\R^N$ (or some other space). The corresponding matrix $\Xi$ of moments is the tensor product $\Xi=\Phi\otimes\Phi$ with entries $\Xi_{ij}=\phi_i\phi_j$. Suppose $\mu$ is a probability measure supported in $\R^N$. The vector $\ba$ and the matrix $\bA$ defined through
\begin{equation}\label{mh}
\ba=\int_{\R^N}\Phi\,d\mu(\bu),\quad \bA=\int_{\R^N}\Phi\otimes\Phi\,d\mu(\bu),
\end{equation}
are the corresponding vector of moments and Hankel matrix, respectively. Note that $\ba$ is the first row and column of $\bA$. 
\begin{proposition}
If $\bA$ and $\ba$ are the vector of moments and Hankel matrix as in (\ref{mh}) with respect to $\Phi$ and a certain probability measure $\mu$, then
\begin{equation}\label{desg}
\bA\ge\ba\otimes\ba
\end{equation}
in the sense of symmetric matrices. 
\end{proposition}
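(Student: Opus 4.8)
The plan is to reduce the claimed matrix inequality (\ref{desg}) to a scalar statement by testing against an arbitrary fixed vector $\bc$ lying in the same space where $\Phi$ takes its values (say $\R^n$, with $n$ the number of component functions $\phi_i$). Indeed, $\bA\ge\ba\otimes\ba$ means precisely that $\bc\cdot(\bA-\ba\otimes\ba)\bc\ge0$ for every such $\bc$, so it suffices to establish this single inequality with $\bc$ regarded as arbitrary but fixed, and then let $\bc$ range freely at the end.

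First I would rewrite each of the two quadratic forms by pulling the integral out through the bilinear action of $\bc$. Using the definition $\bA=\int_{\R^N}\Phi\otimes\Phi\,d\mu$ together with the elementary algebraic identity $\bc\cdot(\Phi\otimes\Phi)\bc=(\bc\cdot\Phi)^2$ (since the $(i,j)$ entry of $\Phi\otimes\Phi$ is $\phi_i\phi_j$), one obtains
$$\bc\cdot\bA\bc=\int_{\R^N}(\bc\cdot\Phi(\bu))^2\,d\mu(\bu).$$
On the other hand, because $\ba=\int_{\R^N}\Phi\,d\mu$ is exactly the first moment, the second form collapses to
$$\bc\cdot(\ba\otimes\ba)\bc=(\bc\cdot\ba)^2=\left(\int_{\R^N}\bc\cdot\Phi(\bu)\,d\mu(\bu)\right)^2.$$

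Introducing the scalar function $g(\bu)=\bc\cdot\Phi(\bu)$, the inequality to be proved is then just $\int g^2\,d\mu\ge(\int g\,d\mu)^2$. This is the step where the hypothesis that $\mu$ is a \emph{probability} measure enters decisively: the inequality is precisely Jensen's inequality for the convex map $s\mapsto s^2$, equivalently the Cauchy--Schwarz inequality applied to $g$ and the constant function $1$ against the measure $d\mu$, both of which require the total mass $\mu(\R^N)=1$. Combining the two displayed computations gives $\bc\cdot(\bA-\ba\otimes\ba)\bc\ge0$, and since $\bc$ was arbitrary, this is exactly (\ref{desg}).

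I do not anticipate a genuine obstacle here, as the result is the familiar fact that a covariance matrix is positive semidefinite; the only points requiring any care are the bookkeeping identity $\bc\cdot(\Phi\otimes\Phi)\bc=(\bc\cdot\Phi)^2$ and the explicit use of $\mu(\R^N)=1$ that makes Jensen applicable. All integrals above are well defined because the very existence of $\bA$ in (\ref{mh}) guarantees the needed square-integrability of each component of $\Phi$.
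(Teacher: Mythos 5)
Your proof is correct. Note that the paper itself offers no proof of this proposition at all --- it is stated as a classical fact from the theory of moment problems (with pointers to the references of Akhiezer and Shoat--Tamarkin), so there is nothing to compare against line by line. Your argument is the standard one the author surely had in mind: testing $\bA-\ba\otimes\ba$ against an arbitrary vector $\bc$, using the identity $\bc\cdot(\Phi\otimes\Phi)\bc=(\bc\cdot\Phi)^2$ to reduce to the scalar inequality $\int g^2\,d\mu\ge\bigl(\int g\,d\mu\bigr)^2$, and invoking Cauchy--Schwarz (or Jensen) where the hypothesis $\mu(\R^N)=1$ is exactly what is needed; this is the familiar statement that a covariance matrix is positive semidefinite, and your attention to the integrability point (existence of $\bA$ gives $g\in L^2(\mu)$) is a detail the paper glosses over entirely.
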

What is quite remarkable is that, sometimes, the condition in this statement is a characterization of feasible Hankel matrices with respect to probability measures. Most of the time it is not, and (\ref{desg}) is just a necessary condition. See \cite{akhiezer}, \cite{shoatamarkin}. 

If we go back to our situation in non-linear conductivity, we take $\Phi=(1, \bu, |\bu|^2)$, and 
$$
\Xi=\Phi\otimes\Phi=\begin{pmatrix}1&\bu&|\bu|^2\\ \bu^T&\bu\otimes\bu& |\bu|^2\bu\\
|\bu|^2&|\bu|^2\bu^T&|\bu|^4\end{pmatrix}.
$$
If $\mu$ is a probability measure supported in $\R^N$, then we write
$$
\int_{\R^N}(1, \bu, |\bu|^2)\otimes(1, \bu, |\bu|^2)\,d\mu(\bu)=
\begin{pmatrix}1&(\bU, U)\\ (\bU, U)^T&\bQ\end{pmatrix}
$$
where $\bU\in\R^N$, $U\in\R$, $\bQ\in\R^{(N+1)\times(N+1)}$. 
If $\mu=\mu_i$ for $i=1, 0$, then we would have
$$
\int_{\R^N}(1, \bu, |\bu|^2)\otimes(1, \bu, |\bu|^2)\,d\mu_i(\bu)=
\begin{pmatrix}1&(\bU_i, U_i)\\ (\bU_i, U_i)^T&\bQ_i\end{pmatrix}.
$$

The conditions on our Problem \ref{probuno} can be recast in the form
\begin{problem}\label{probdos}
Find a characterization of the set of triplets $(t, \bU, \bV)\in[0, 1]\times\R^N\times\R^N$ so that 
\begin{gather}
\bU=t\bU_1+(1-t)\bU_0,\quad \bV=t\alpha_1\bQ_{12, 1}+(1-t)\alpha_0\bQ_{12, 0},\nonumber\\
\bU\cdot\bV=t\alpha_1\bQ_{22, 1}+(1-t)\alpha_0\bQ_{22, 0},\nonumber
\end{gather}
and
\begin{equation*}
\begin{pmatrix}\bQ_{11, i}&\bQ_{12, i}\\ \bQ_{12, i}^T&\bQ_{22, i}\end{pmatrix}\ge \begin{pmatrix} \bU_i&U_i\end{pmatrix}\otimes\begin{pmatrix} \bU_i&U_i\end{pmatrix},\quad U_i\ge|\bU_i|^2,\quad i=1, 0,\nonumber
\end{equation*}
for  some $\bQ_{11,i}\in\R^{N\times N}$, $\bQ_{12,i}, \bU_i\in\R^N$, and $\bQ_{22,i}, U_i\in\R$.
\end{problem}


\section{Necessary conditions}
We can solve explicitly Problem \ref{probdos}. Because the set of triplets in Problem \ref{probdos} is larger than those in Problem \ref{probuno}, this will provide necessary conditions that in general will not be sufficient. 

\begin{theorem}
If $(t, \bU, \bV)$ is such that
$$
\gamma|\bU|^4\le\bV\cdot\bU,
$$
for
$$
\gamma=\frac{\alpha_1\alpha_0}{((1-t)\alpha_1^{1/3}+t\alpha_0^{1/3})^3},
$$
then matrices $\bQ_{11,i}\in\R^{N\times N}$, vectors $\bQ_{12,i}, \bU_i\in\R^N$ and scalars $\bQ_{22,i}, U_i\in\R$ can be found so that the constraints in Problem \ref{probdos} hold. The converse is also correct. 
\end{theorem}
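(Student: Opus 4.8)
The plan is to reduce the matrix feasibility of Problem \ref{probdos} to a single scalar optimization by successively eliminating the variables that appear only in the semidefinite constraint, and then to evaluate the resulting minimum explicitly.

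First I would eliminate $\bQ_{11,i}$. Since this block occurs in none of the three equality constraints, it is entirely free. Writing the semidefinite condition as
$$\begin{pmatrix}\bQ_{11,i}-\bU_i\otimes\bU_i & \bQ_{12,i}-U_i\bU_i\\ (\bQ_{12,i}-U_i\bU_i)^T & \bQ_{22,i}-U_i^2\end{pmatrix}\ge 0,$$
a Schur-complement argument shows that, for given $\bU_i,U_i,\bQ_{12,i},\bQ_{22,i}$, a matrix $\bQ_{11,i}$ making this hold exists if and only if $\bQ_{22,i}\ge U_i^2$: when the inequality is strict the off-diagonal block is absorbed by taking $\bQ_{11,i}$ large, and the degenerate case $\bQ_{22,i}=U_i^2$ forces $\bQ_{12,i}=U_i\bU_i$. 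Thus $\bQ_{11,i}$, and (away from the degenerate case) $\bQ_{12,i}$, drop out: the vector $\bV$ may be split in $\bV=t\alpha_1\bQ_{12,1}+(1-t)\alpha_0\bQ_{12,0}$ in whatever way is convenient, and that equation becomes non-binding. This decoupling of $\bV$ is precisely why Problem \ref{probdos} is a genuine relaxation and the resulting condition will only be necessary for Problem \ref{probuno}.

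Next I would eliminate $U_i$ and $\bQ_{22,i}$. Since $U_i$ enters only through $U_i\ge|\bU_i|^2$ and $\bQ_{22,i}\ge U_i^2$, the smallest admissible value of $\bQ_{22,i}$ for given $\bU_i$ is $|\bU_i|^4$, attained at $U_i=|\bU_i|^2$. Writing $q_i=\bQ_{22,i}$, the problem collapses to: find a splitting $\bU=t\bU_1+(1-t)\bU_0$ and scalars $q_i\ge|\bU_i|^4$ with $\bU\cdot\bV=t\alpha_1 q_1+(1-t)\alpha_0 q_0$. Because the $q_i$ may be taken arbitrarily large, this is solvable exactly when
$$\bU\cdot\bV\ge\min_{\bU=t\bU_1+(1-t)\bU_0}\bigl[t\alpha_1|\bU_1|^4+(1-t)\alpha_0|\bU_0|^4\bigr].$$
This already gives the converse (necessity) direction, since any feasible configuration has $q_i\ge|\bU_i|^4$ and hence $\bU\cdot\bV$ is bounded below by this same minimum.

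It remains to compute the minimum, which is the heart of the matter. Parametrizing by $\bU_1=\bU-(1-t)\bbU$, $\bU_0=\bU+t\bbU$, I would first observe that decomposing $\bbU$ into parts parallel and orthogonal to $\bU$ makes both $|\bU_1|^2$ and $|\bU_0|^2$ strictly increasing in the orthogonal part, so the minimizer is parallel, $\bbU=s\bU$. Setting $a=1-(1-t)s$, $b=1+ts$, the problem becomes minimization of $|\bU|^4(t\alpha_1 a^4+(1-t)\alpha_0 b^4)$ under the linear constraint $ta+(1-t)b=1$; a Lagrange computation gives $\alpha_1 a^3=\alpha_0 b^3$ at the optimum, and back-substitution produces the value $\gamma|\bU|^4$ with $\gamma$ exactly as stated. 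The main obstacle is organizing this constrained quartic minimization cleanly and verifying that the critical point is the global minimum in the admissible range; the reduction steps beforehand are conceptually straightforward once the freeness of $\bQ_{11,i}$ is exploited. For sufficiency at strict inequality $\bU\cdot\bV>\gamma|\bU|^4$ one builds the matrices by taking $q_i$ slightly above $|\bU_i|^4$, so the degenerate Schur case is avoided and $\bV$ splits freely, the boundary case being recovered in the limit.
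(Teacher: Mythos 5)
Your proposal is correct and takes essentially the same route as the paper: both arguments reduce Problem \ref{probdos} to finding a splitting $\bU=t\bU_1+(1-t)\bU_0$ and scalars $q_i\ge|\bU_i|^4$ with $t\alpha_1 q_1+(1-t)\alpha_0 q_0=\bU\cdot\bV$ --- your Schur-complement elimination of $\bQ_{11,i}$ and $\bQ_{12,i}$ is exactly what the paper achieves through its explicit choices $\bQ_{11,i}=\bU_i\otimes\bU_i+\gamma_i\id$, $\alpha_1\bQ_{12,1}=\bV-(1-t)\bbe$, $\sigma_i\gamma_i=|\bu_i|^2$ --- and both then evaluate the same quartic minimum, your Lagrange condition $\alpha_1 a^3=\alpha_0 b^3$ being equivalent to the paper's minimizer $\bbU=\frac{\alpha_1^{1/3}-\alpha_0^{1/3}}{(1-t)\alpha_1^{1/3}+t\alpha_0^{1/3}}\bU$, with common value $\gamma|\bU|^4$. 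Your handling of the equality case by a limiting argument is precisely as loose as the paper's own ``by continuity'' remark (at equality the degenerate Schur case forces $\bQ_{12,i}=U_i\bU_i$ and hence $\bV$ parallel to $\bU$), so on that delicate point you introduce no gap beyond the one already present in the paper.
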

\begin{proof}
Suppose that the triplet $(t, \bU, \bV)$ is given in such a way that 
\begin{equation}\label{condicionimp}
\frac1{\alpha_1}(\bV\cdot\bU-(1-t)c)>|\bU-(1-t)\ba|^4,\quad 
\frac1{\alpha_0}(\bV\cdot\bU+tc)>|\bU+t\ba|^4,
\end{equation}
for a certain vector $\ba$ and scalar $c$. Take then $U_1$ and $U_0$, non-negative, so that
\begin{gather}
|\bU-(1-t)\ba|^4\le U_1^2<\frac1{\alpha_1}(\bV\cdot\bU-(1-t)c),\nonumber\\
|\bU+t\ba|^4\le U_0^2<\frac1{\alpha_0}(\bV\cdot\bU+tc).\nonumber
\end{gather}
Take any two vectors $\bu_1$, $\bu_0$, in such a way that
$$
\bV=(1-t)\alpha_0[U_0(\bU+t\ba)+\bu_0]+t\alpha_1[U_1(\bU-(1-t)\ba)+\bu_1].
$$
Under this identity, there is a vector $\bbe$ such that 
\begin{gather}
\frac1{\alpha_1}(\bV-(1-t)\bbe)=U_1(\bU-(1-t)\ba)+\bu_1,\nonumber\\
\frac1{\alpha_0}(\bV+t\bbe)=U_0(\bU+t\ba)+\bu_0.\nonumber
\end{gather}
Finally, take 
\begin{gather}
\sigma_1=\frac1{\alpha_1}(\bV\cdot\bU-(1-t)c)-U_1^2>0,\quad \sigma_0=\frac1{\alpha_0}(\bV\cdot\bU+tc)-U_0^2>0,\nonumber\\
\sigma_1\gamma_1=|\bu_1|^2,\quad\sigma_0\gamma_0=|\bu_0|^2,\quad \gamma_i>0.\nonumber
\end{gather}
and put
\begin{gather}
\bQ_{11, 1}=(\bU-(1-t)\ba)\otimes (\bU-(1-t)\ba)+\gamma_1\id,\nonumber\\
\bQ_{11, 0}=(\bU+t\ba)\otimes (\bU+t\ba)+\gamma_0\id,\nonumber
\end{gather}
as well as
\begin{gather}
\bU_1=\bU-(1-t)\ba,\quad \bU_0=\bU+t\ba,\quad \ba\in\R^N,\nonumber\\
\alpha_1\bQ_{12,1}=\bV-(1-t)\bbe,\quad \alpha_0\bQ_{12,0}=\bV+t\bbe,\quad \bbe\in\R^N,\nonumber\\
\alpha_1\bQ_{22,1}=\bV\cdot\bU-(1-t)c,\quad \alpha_0\bQ_{22,0}=\bV\cdot\bU+tc,\quad c\in\R.\nonumber
\end{gather}
$\id$ is the identity matrix. 
It is now elementary to check that, with this choice of matrices, vectors, and scalars, all of the conditions
$$
\begin{pmatrix}\bQ_{11, i}&\bQ_{12, i}\\ \bQ_{12, i}^T&\bQ_{22, i}\end{pmatrix}\ge \begin{pmatrix} \bU_i&U_i\end{pmatrix}\otimes\begin{pmatrix} \bU_i&U_i\end{pmatrix},\quad U_i\ge|\bU_i|^2,\quad i=1, 0,\nonumber
$$
are met. Indeed, by the various choices explained above, we have
$$
\begin{pmatrix}\bQ_{11, i}&\bQ_{12, i}\\ \bQ_{12, i}^T&\bQ_{22, i}\end{pmatrix}- \begin{pmatrix} \bU_i&U_i\end{pmatrix}\otimes\begin{pmatrix} \bU_i&U_i\end{pmatrix}=\begin{pmatrix}\gamma_i\id&\bu_i\\ \bu_i^T&\sigma_i
\end{pmatrix},
$$
and this last matrix is positive (non-negative) definite, again by choice of the various parameters. The other inequality $U_i\ge|\bU_i|^2$ also holds by choice of those values. 
If equalities occur in (\ref{condicionimp}), by continuity the conclusion is still valid. 

On the contrary, it is straightforward to have that $\bQ_{22, i}\ge U_i^2\ge|\bU_i|^4$ from the inequalities. Since we also have
\begin{gather}
\bU_1=\bU-(1-t)\ba,\quad \bU_0=\bU+t\ba,\quad\hbox{for some } \ba\in\R^N,\nonumber\\
\alpha_1\bQ_{22,1}=\bV\cdot\bU-(1-t)c,\quad \alpha_0\bQ_{22,0}=\bV\cdot\bU+tc,\quad \hbox{for some } c\in\R,\nonumber
\end{gather}
then  (\ref{condicionimp}) is correct. 

Finally, it is an interesting exercise to conclude that (\ref{condicionimp}) is equivalent to the constraint on $(t, \bU, \bV)$ in the statement. Note that (\ref{condicionimp}) can be rewritten, upon elimination of $c$,  in the form
$$
\bU\cdot\bV\ge t\alpha_1|\bU-(1-t)\ba|^4+(1-t)\alpha_0|\bU+t\ba|^4.
$$
This process follows the same path as with the linear case. If we consider the function 
$$
\Phi(\bbU)=t\alpha_1|\bU-(1-t)\bbU|^4+(1-t)\alpha_0|\bU+t\bbU|^4,
$$
with the only change on the exponent, the minimum is found to be at 
$$
\bbU=\frac{\alpha_1^{1/3}-\alpha_0^{1/3}}{(1-t)\alpha_1^{1/3}+t\alpha_0^{1/3}}\bU.
$$
By substituting this $\bbU$ in $\Phi$, and demanding $\Phi(\bbU)\le\bU\cdot\bV$ we arrive at the inequality in the statement. 
\end{proof}
This proof also yields an important piece of information. If we recall that vector $\bbU$ is somehow related to the direction of lamination, once we have that a triplet $(t, \bU, \bV)$ complies with the inequality in the statement, then feasible $\bbU$'s are those that verify
$$
t\alpha_1|\bU-(1-t)\bbU|^4+(1-t)\alpha_0|\bU+t\bbU|^4\le\bU\cdot\bV.
$$
This is some sort of quartic ellipsoid in $\R^N$. 

\section{Sufficient conditions}
For the sufficiency part, let us start by looking at a single, first-order laminate.
This corresponds to taking $\mu_i$ a delta measure $\mu_i=\delta_{\bU_i}$. Then the probability measure
$$
\mu=t\delta_{(\bU_1, \alpha_1|\bU_1|^2\bU_1)}+(1-t)\delta_{(\bU_0, \alpha_0|\bU_0|^2\bU_0)}
$$
will be a first-order laminate, provided that
\begin{equation}\label{primerorden}
(\alpha_1|\bU_1|^2\bU_1-\alpha_0|\bU_0|^2\bU_0)\cdot(\bU_1-\bU_0)=0.
\end{equation}
If we let, as above, $\bbU=\bU_1-\bU_0$, the direction of lamination, then
$$
\bU_1=\bU-(1-t)\bbU,\quad \bU_0=\bU+t\bbU,
$$
and by taking these expressions back into (\ref{primerorden}), 
$$
(\alpha_1|\bU-(1-t)\bbU|^2(\bU-(1-t)\bbU)-\alpha_0|\bU+t\bbU|^2(\bU+t\bbU))\cdot\bbU=0.
$$
This is a quartic equation for feasible lamination directions $\bbU$ corresponding to first moment $\bU$ with volume fraction $t$. The associated field $\bV$ is given by
$$
\bV=t\alpha_1|\bU-(1-t)\bbU|^2(\bU-(1-t)\bbU)+(1-t)\alpha_0|\bU+t\bbU|^2(\bU+t\bbU).
$$

\begin{lemma}\label{convexa}
The function
$$
\Psi_{(t, \bU)}(\bx)=(\alpha_0|t\bx+\bU|^2(t\bx+\bU)+\alpha_1|(1-t)\bx-\bU|^2((1-t)\bx-\bU))\cdot\bx,\quad \bx\in\R^N
$$
is strictly convex, for arbitrary choices of $\bU\in\R^N$, and $t\in[0, 1]$. 
\end{lemma}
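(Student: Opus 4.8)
The plan is to exploit that $\Psi_{(t,\bU)}$ is a quartic polynomial, hence smooth, so that its convexity is equivalent to $\be\cdot\nabla^2\Psi_{(t,\bU)}(\bx)\,\be\ge0$ for all $\bx,\be\in\R^N$. For strict convexity it is enough to show that along every line $s\mapsto\Psi_{(t,\bU)}(\bx+s\be)$ the second derivative is nonnegative and not identically zero on any subinterval, since a one-variable function with these properties is strictly convex. Thus I would fix $\bx$ and $\be$, set $g(s)=\Psi_{(t,\bU)}(\bx+s\be)$, and reduce the whole lemma to the sign of the polynomial $g''(s)$, which, being a nonzero polynomial, vanishes only at finitely many points once it is shown to be nonnegative.

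To organize the computation I would use the affine substitution $\by=t\bx+\bU$ and $\bv=(1-t)\bx-\bU$, which become $\by+st\be$ and $\bv+s(1-t)\be$ along the line, so that each summand of $\Psi_{(t,\bU)}$ has the model form $|\cdot|^2\,(\cdot)\cdot\bx$ with an affine argument. Differentiating twice and collecting terms presents $g''$ as a quadratic form in the inner products $\by\cdot\be$, $\bv\cdot\be$, $\bx\cdot\be$ and the norms $|\by|^2,|\bv|^2,|\be|^2$. A convenient conceptual check on this step is the potential representation $\Psi_{(t,\bU)}=\nabla F\cdot\bx$, valid for $t\in(0,1)$, with
$$
F(\bx)=\frac{\alpha_0}{4t}\,|t\bx+\bU|^4+\frac{\alpha_1}{4(1-t)}\,|(1-t)\bx-\bU|^4,
$$
a manifestly convex quartic; then $\nabla^2\Psi_{(t,\bU)}=2\nabla^2F+D^3F[\bx]$, which displays convexity as a competition between the nonnegative Hessian of $F$ and the sign-indefinite third-order term. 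The endpoints $t\in\{0,1\}$, where one summand disappears, I would treat directly.

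The reduction I would then make is that the worst case is collinear: using Cauchy--Schwarz to bound the cross inner products by the norms, nonnegativity of $g''$ for all configurations follows from nonnegativity in the aligned case $\bU\parallel\bx\parallel\be$. In that one-dimensional case $g''$ collapses, with $\tau=1-t$ and $U=|\bU|$, to
$$
g''=12\,p_3\,x^2+18\,p_2\,Ux+6\,p_1\,U^2,\quad p_1=\alpha_0 t+\alpha_1\tau,\ \ p_2=\alpha_0 t^2-\alpha_1\tau^2,\ \ p_3=\alpha_0 t^3+\alpha_1\tau^3.
$$
Since $p_1>0$ and $p_3>0$ the leading term opens upward, and the entire content of the lemma is concentrated in the discriminant bound
$$
9\,p_2^{\,2}\le 8\,p_1\,p_3 .
$$

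Establishing this coefficient inequality is the step I expect to be the main obstacle: it is a purely algebraic statement that, after homogenizing in $(t,\tau)$, reads as a Cauchy--Schwarz/power-mean comparison for the two-atom weighting that places mass $\alpha_0$ at $t$ and $\alpha_1$ at $\tau$. Everything rests on it---the differential calculus leading up to it being routine---and I would expect its validity to hinge on both conductivities being strictly positive and to require a careful analysis of the extremal ratio $p_2^{\,2}/(p_1p_3)$ over $t\in[0,1]$. Once the discriminant bound is in force, $g''$ is a nonnegative polynomial that vanishes at only finitely many $s$, which upgrades convexity along every line, and hence on all of $\R^N$, to strict convexity.
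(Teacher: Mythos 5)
Your reduction is correct as far as it goes: the one-dimensional second derivative $g''(x)=12p_3x^2+18p_2Ux+6p_1U^2$ and the resulting discriminant criterion $9p_2^2\le 8p_1p_3$ are both computed correctly. But the step you flagged as the main obstacle is a genuine gap, and it cannot be closed: the inequality $9p_2^2\le 8p_1p_3$ is \emph{false} in general. As $t\to1$ (so $\tau\to0$) one has $p_1,p_2,p_3\to\alpha_0$, hence $9p_2^2\to9\alpha_0^2$ while $8p_1p_3\to8\alpha_0^2$; symmetrically for $t\to0$ with $\alpha_1$ in place of $\alpha_0$. So for any $0<\alpha_0<\alpha_1$ and $\bU\ne\bcero$ the bound fails once $t$ is close enough to $0$ or $1$, and then $g''$ is genuinely negative somewhere: for instance, with $\alpha_0=1$, $\alpha_1=1.1$, $t=0.99$, $U=1$, at the point $x$ with $tx=-3/4$ one finds $g''(x)\approx-0.68$. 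Even at $t=1/2$ the criterion reads $9(\alpha_1-\alpha_0)^2\le 8(\alpha_1+\alpha_0)^2$, which fails as soon as the contrast $\alpha_1/\alpha_0$ exceeds $17+12\sqrt2\approx34$. Since the collinear configuration is just the restriction of $\Psi_{(t,\bU)}$ to the line $\R\bU$, and restrictions of convex functions to lines are convex, these one-dimensional counterexamples already disprove convexity in $\R^N$. In short: your approach, carried honestly to completion, refutes the lemma rather than proving it.

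This is not a defect of your strategy but of the statement itself, and it is worth seeing where the paper's own one-line proof goes wrong. Writing $\ba=t\bx+\bU$, $\bbe=(1-t)\bx-\bU$, so that $\bx=\ba+\bbe$, the paper expands $\Psi_{(t,\bU)}(\bx)=(\alpha_0|\ba|^2\ba+\alpha_1|\bbe|^2\bbe)\cdot(\ba+\bbe)$ and claims the cross term is $(\alpha_1+\alpha_0)(\ba\cdot\bbe)$, a convex quadratic. The correct cross term is $(\alpha_0|\ba|^2+\alpha_1|\bbe|^2)(\ba\cdot\bbe)$: the quartic weight has been dropped, and with it the convexity of that term. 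So the paper's decomposition does not prove the lemma, and your discriminant condition $9p_2^2\le8p_1p_3$ is in fact the precise one-dimensional statement of what would be needed; any correct version of the lemma must restrict $t$ and the contrast $\alpha_1/\alpha_0$ accordingly. One secondary caveat about your write-up: the claim that ``the worst case is collinear,'' justified only by an appeal to Cauchy--Schwarz, is asserted rather than proved, and since the cross terms in $g''$ occur with both signs it is not automatic; here it is moot only because the collinear case already fails.
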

\begin{proof}
Just notice that $\bx=(t\bx+\bU)+((1-t)\bx-\bU)$, and so
$$
\Psi_{(t, \bU)}(\bx)=\alpha_0|t\bx+\bU|^4
+\alpha_1|(1-t)\bx-\bU|^4+(\alpha_1+\alpha_0)((1-t)\bx-\bU)\cdot(t\bx+\bU).
$$
Each one of these three terms is a strictly convex function of $\bx$. 
\end{proof}

Lemma \ref{convexa} ensures that the set $\Psi_{(t, \bU)}(\bbU)\le0$ is a strictly convex set for every possible selection of $\bU$ and $t$. So if $\bbU$ is such that $\Psi_{(t, \bU)}(\bbU)<0$, it can be decomposed as a convex combination (in a non-unique way) $\bbU=r\bbU_1+(1-r)\bbU_0$ for some $r\in(0, 1)$, and in such a way that $\Psi_{(t, \bU)}(\bbU_i)=0$ (for the same values of $\bU$ and $t$). In this way, if we further set
\begin{equation}\label{uvee}
\bV_i=t\alpha_1|\bU-(1-t)\bbU_i|^2(\bU-(1-t)\bbU_i)+(1-t)\alpha_0|\bU+t\bbU_i|^2(\bU+t\bbU_i),\quad i=1, 0,
\end{equation}
then the probability measure 
$$
r\delta_{(\bU, \bV_1)}+(1-r)\delta_{(\bU, \bV_0)}
$$
is a laminate, because trivially $(\bU-\bU)\cdot(\bV_1-\bV_0)=0$, and for
$$
\bU_{(1, i)}=\bU-(1-t)\bbU_i,\quad \bU_{(0, i)}=\bU+t\bbU_i,\quad i=1, 0,
$$
the two probability measures
$$
t\delta_{(\bU_{(1, i)}, \alpha_1|\bU_{(1, i)}|^2\bU_{(1, i)})}+(1-t)\delta_{(\bU_{(0, i)}, \alpha_0|\bU_{(0, i)}|^2\bU_{(0, i)})},
$$
can also be laminated precisely because $\Psi_{(t, \bU)}(\bbU_i)=0$. 
Altogether the probability measure
\begin{gather}
rt\delta_{(\bU_{(1, 1)}, \alpha_1|\bU_{(1, 1)}|^2\bU_{(1, 1)})}+r(1-t)\delta_{(\bU_{(0, 1)}, \alpha_0|\bU_{(0, 1)}|^2\bU_{(0, 1)})}\nonumber\\
+(1-r)t\delta_{(\bU_{(1, 0)}, \alpha_1|\bU_{(1, 0)}|^2\bU_{(1, 0)})}+(1-r)(1-t)\delta_{(\bU_{(0, 0)}, \alpha_0|\bU_{(0, 0)}|^2\bU_{(0, 0)})}\nonumber
\end{gather}
is a second-order laminate for the triplet $(t, \bU, \bV)$ where
$$
\bV=r\bV_1+(1-t)\bV_0,
$$
and $\bV_i$ are given in (\ref{uve}). 
\begin{theorem}
Consider the map
$$
\Phi_{(t, \bU)}:\bx\mapsto \bV=t\alpha_1|\bU-(1-t)\bx|^2(\bU-(1-t)\bx)+(1-t)\alpha_0|\bU+t\bx|^2(\bU+t\bx),
$$
and the set
$$
\bC(t, \bU)=\{\bbU\in\R^N: \Psi_{(t, \bU)}(\bbU)\le0\}.
$$
The triplet $(t, \bU, \bV)$ is reachable by lamination if and only if 
\begin{equation}\label{condlam}
\bV\in\Phi_{(t, \bU)}(\bC(t, \bU)).
\end{equation}
Moreover, when a triplet satisfies this condition, it can always be reached by a second-order laminate. 
\end{theorem}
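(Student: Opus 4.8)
The plan is to prove the two implications separately, after first recording the elementary correspondence between first-order laminates and the boundary of $\bC(t,\bU)$. From the computation leading to (\ref{primerorden}), a first-order laminate with volume fraction $t$, first moment $\bU$ and lamination direction $\bbU$ exists exactly when $\Psi_{(t,\bU)}(\bbU)=0$, and its macroscopic flux is then $\bV=\Phi_{(t,\bU)}(\bbU)$. Since Lemma \ref{convexa} tells us $\bC(t,\bU)$ is strictly convex, its boundary is precisely $\{\Psi_{(t,\bU)}=0\}$, so the first-order laminates realize exactly the hypersurface $\Phi_{(t,\bU)}(\partial\bC(t,\bU))$. I would also note that $\bcero\in\partial\bC(t,\bU)$ for every choice of $t$ and $\bU$, since $\Psi_{(t,\bU)}(\bcero)=0$; this furnishes a canonical boundary direction to laminate against.

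For the sufficiency (\emph{if}) direction, suppose $\bV=\Phi_{(t,\bU)}(\bbU)$ with $\bbU\in\bC(t,\bU)$. If $\Psi_{(t,\bU)}(\bbU)=0$ the direction lies on $\partial\bC$ and the associated first-order laminate realizes $\bV$ directly. If $\Psi_{(t,\bU)}(\bbU)<0$, then $\bbU$ is interior, and I want to present $\bV$ as an interior point of a chord of $\Phi_{(t,\bU)}(\partial\bC)$: once I know $\bV$ is surrounded by this boundary image, any line through $\bV$ meets $\Phi_{(t,\bU)}(\partial\bC)$ in two points $\bV_1,\bV_0$ straddling $\bV$, so $\bV=r\bV_1+(1-r)\bV_0$ for some $r\in[0,1]$ with $\bV_i=\Phi_{(t,\bU)}(\bbU_i)$, $\bbU_i\in\partial\bC$. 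The two first-order laminates attached to $\bbU_1,\bbU_0$ share the common first moment $\bU$, so their jump in the $\bu$-slot vanishes and they are automatically div-curl compatible; laminating them exactly as in the construction preceding (\ref{uvee}) produces a second-order laminate whose flux is this convex combination, namely $\bV$. This simultaneously settles the \emph{if} part and the ``moreover'' (second-order) assertion.

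For the necessity (\emph{only if}) direction I would show that any flux reachable by lamination lies in $\Phi_{(t,\bU)}(\bC(t,\bU))$. First-order laminates land in $\Phi_{(t,\bU)}(\partial\bC)\subset\Phi_{(t,\bU)}(\bC)$ by the correspondence above, and combining two compatible states with the common first moment $\bU$ produces fluxes that are convex combinations of points of $\Phi_{(t,\bU)}(\partial\bC)$. Thus the reachable set is built from chords of the boundary image, and the content of necessity is that such chords cannot leave $\Phi_{(t,\bU)}(\bC)$; equivalently, that the image region is chord-closed. Both directions therefore collapse onto one geometric fact: $\Phi_{(t,\bU)}(\bC(t,\bU))$ coincides with the union of the chords of $\Phi_{(t,\bU)}(\partial\bC(t,\bU))$.

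This is where the nonlinearity bites and is the main obstacle, because $\Phi_{(t,\bU)}$ is cubic and the image of a convex body under a cubic map need not be convex. The key lemma I would isolate is that $\Phi_{(t,\bU)}$ restricted to $\bC$ is an injective map onto a \emph{convex} region. Writing $\bU_1=\bU-(1-t)\bbU$, $\bU_0=\bU+t\bbU$, one computes the symmetric Jacobian
\[
D\Phi_{(t,\bU)}=t(1-t)\bigl[\alpha_0(|\bU_0|^2\id+2\bU_0\otimes\bU_0)-\alpha_1(|\bU_1|^2\id+2\bU_1\otimes\bU_1)\bigr],
\]
which at $\bbU=\bcero$ is negative definite because $\alpha_0<\alpha_1$ and $|\bU|^2\id+2\bU\otimes\bU\succ0$. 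I would show it remains negative definite on all of $\bC$ by comparing the locus where $D\Phi_{(t,\bU)}$ degenerates, a balance of square-root type in $\alpha_0,\alpha_1$, with $\partial\bC$, the cubic balance $\Psi_{(t,\bU)}=0$; in the one-dimensional reduction the critical point of $\Phi$ always sits strictly beyond $\partial\bC$ (since the relevant ratio is increasing in the exponent, so the exponent $1/3$ on $\partial\bC$ precedes the exponent $1/2$ at the critical set), and I expect the same monotone comparison to govern the general case. Negative definiteness yields injectivity, hence $\Phi_{(t,\bU)}$ is a homeomorphism carrying $\partial\bC$ onto $\partial\Phi_{(t,\bU)}(\bC)$, which is all that sufficiency needs. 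The genuinely delicate point is promoting this to convexity of the image, as required for necessity so that no chord escapes; establishing that convexity, rather than mere injectivity, is the step I expect to demand the full strength of the comparison between $\bC$ and the critical set of $\Phi_{(t,\bU)}$.
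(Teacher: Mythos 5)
Your proof architecture is the same as the paper's: first-order laminates correspond exactly to directions $\bbU$ with $\Psi_{(t,\bU)}(\bbU)=0$, two such laminates sharing the first moment $\bU$ are automatically div-curl compatible and can be laminated again, so second-order laminates realize precisely the chords of $\Phi_{(t,\bU)}(\partial\bC(t,\bU))$, and the whole theorem thereby reduces to two geometric facts about the cubic map $\Phi_{(t,\bU)}$: (a) $\Phi_{(t,\bU)}(\bC(t,\bU))$ is convex, and (b) $\partial\Phi_{(t,\bU)}(\bC(t,\bU))=\Phi_{(t,\bU)}(\partial\bC(t,\bU))$. Your reduction to these facts is correct, your Jacobian formula is correct, $D\Phi_{(t,\bU)}(\bcero)\prec0$ is correct, and the one-dimensional comparison between the exponent-$1/3$ locus $\partial\bC$ and the exponent-$1/2$ degeneracy locus is a genuine and accurate observation; the refinement that injectivity alone (via invariance of domain, giving $\partial\Phi(\bC)\subset\Phi(\partial\bC)$) already suffices for the \emph{if} half is also sound.

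The gap is that the proposal stops exactly where the theorem begins: neither (a) nor (b) is proved, and both are left as explicit expectations. Concretely: (i) negative definiteness of $D\Phi_{(t,\bU)}$ on all of $\bC(t,\bU)$ for $N\ge2$ does not follow from the one-dimensional comparison, because off the axis of $\bU$ the vectors $\bU_1=\bU-(1-t)\bbU$ and $\bU_0=\bU+t\bbU$ are not collinear, the two matrices $|\bU_i|^2\id+2\bU_i\otimes\bU_i$ have different eigenvectors, and an inequality between the scalar loci does not upgrade to the required matrix inequality $\alpha_1(|\bU_1|^2\id+2\bU_1\otimes\bU_1)\succ\alpha_0(|\bU_0|^2\id+2\bU_0\otimes\bU_0)$ on $\bC$; (ii) even granting (i), you obtain only injectivity, hence (b) and sufficiency, while convexity of the image — which is exactly what prevents chords of $\Phi(\partial\bC)$ and the iterated convex combinations produced by higher-order laminates from escaping $\Phi(\bC)$ — remains unproved, so the \emph{only if} half of the equivalence is missing. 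The paper addresses precisely these two points by a different device: it rewrites
$$
\Phi_{(t, \bU)}(\bx)=\bigl(t\alpha_1|\bU-(1-t)\bx|^2+(1-t)\alpha_0|\bU+t\bx|^2\bigr)\bU+t(1-t)\bigl(\alpha_0|\bU+t\bx|^2-\alpha_1|\bU-(1-t)\bx|^2\bigr)\bx
$$
and foliates $\R^N$ by the spheres $\{\alpha_0|\bU+t\bx|^2-\alpha_1|\bU-(1-t)\bx|^2=k\}$, on which $\Phi_{(t,\bU)}$ acts as a translation plus homothety corrected by the scalar factor $|\bU-(1-t)\bx|^2$, and concludes that the images of these spheres form a nested family of convex deformed spheres, yielding (a) and (b) together. (That argument is itself rather informal, but it is the step your route leaves open.) To complete your approach you would need either a direct proof that $\Phi_{(t,\bU)}(\bC(t,\bU))$ is convex — for instance by proving the paper's sphere-image claim rigorously — or to replace convexity by a chord-closedness statement proved by induction on the order of lamination; as written, the necessity direction has no proof.
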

\begin{proof}
According to our previous discussion, vector $\bV$ should belong to the convex hull of the image under $\Phi_{(t, \bU)}$ of the boundary of $\bC(t, \bU)$, i.e. the set of vectors where $\Psi_{(t, \bU)}=0$. Let us study the structure of the map $\Phi_{(t, \bU)}$ to check that indeed 
\begin{enumerate}
\item $\Phi_{(t, \bU)}(\bC(t, \bU))$ is a convex set;
\item $\partial\Phi_{(t, \bU)}(\bC(t, \bU))=\Phi_{(t, \bU)}(\partial\bC(t, \bU))$.
\end{enumerate}
It is immediate to rewrite 
$$
\Phi_{(t, \bU)}(\bx)=(t\alpha_1|\bU-(1-t)\bx|^2+(1-t)\alpha_0|\bU+t\bx|^2)\bU+t(1-t)(\alpha_0|\bU+t\bx|^2-\alpha_1|\bU-(1-t)\bx|^2)\bx.
$$
For a real constant $k$, the set where 
$$
\alpha_0|\bU+t\bx|^2-\alpha_1|\bU-(1-t)\bx|^2=k
$$
represent, when they are non-empty, concentric circles with a center depending on $\bU$ and the other parameteres of the problem, and a certain radius also depending on these parameters. For points on these sets, the image under $\Phi_{(t, \bU)}$ is given by
$$
\alpha_1|\bU-(1-t)\bx|^2\bU+k(1-t)\bU+t(1-t)k\bx.
$$
Without the first term, it would be a translated homotetic circle. That first term changes the ``center" of the image with $\bx$ according to the factor $|\bU-(1-t)\bx|^2$, but the image will still be a convex, deformed ball. This information, together with the smoothness of the map, implies both claimed properties above. 

Our same discussion prior to the statement of this result, implies that if the triplet $(t, \bU, \bV)$ complies with property (\ref{condlam}) in the statement, 
then it can be obtained by a second-order laminate. On the contrary, the convexity of all sets involved implies that higher-order laminates cannot produced triplets not verifying that same condition. 
\end{proof}

\end{document}